\numberwithin{equation}{section}
\numberwithin{figure}{section}
  \theoremstyle{plain}
  \newtheorem*{thm*}{\protect\theoremname}
\theoremstyle{plain}
\newtheorem{thm}{\protect\theoremname}
  \theoremstyle{plain}
  \newtheorem{prop}[thm]{\protect\propositionname}
  \theoremstyle{plain}
  \newtheorem{cor}[thm]{\protect\corollaryname}
\date{}
\newcommand{\Ind}{\operatorname{-Ind}}
\newcommand{\ona}{\operatorname}
\newcommand{\Hom}{\operatorname{Hom}}
\newcommand{\val}{\operatorname{val}}
\newcommand{\tsr}{\tensor[^\circ]{G}{}}
\newcommand{\oX}[2]{\tensor[^\circ]{#1}{^{#2}}}
\newcommand{\nr}{\operatorname{nr}}
\newcommand{\Fr}{\operatorname{Fr}}
\newcommand{\tors}{\operatorname{torsion}}
\newcommand{\Irr}{\operatorname{Irr}}
  \providecommand{\corollaryname}{Corollary}
  \providecommand{\propositionname}{Proposition}
  \providecommand{\theoremname}{Theorem}
\providecommand{\theoremname}{Theorem}
\begin{document}

\title{Bernstein center of supercuspidal blocks}

\author{manish mishra}

\email{manish.mishra@gmail.com}

\curraddr{Im Neuenheimer Feld 288, D-69120, Heidelberg, Germany}
\begin{abstract}
Let ${\bf G}$ be a tamely ramified connected reductive group defined
over a non-archimedean local field $k.$ We show that the Bernstein
center of a tame supercuspidal block of ${\bf G}(k)$ is isomorphic
to the Bernstein center of a depth zero supercuspidal block of ${\bf G}^{0}(k)$
for some twisted Levi subgroup of ${\bf G}^{0}$ of ${\bf G}$. 
\end{abstract}

\maketitle

\section{Introduction}

Let ${\bf G}$ be a connected reductive group defined over a non archimedean
local field $k$. Assume that ${\bf G}$ splits over a tamely ramified
extension $k^{t}$ of $k$. We will denote the group of $k$-rational
points of ${\bf G}$ by $G$ and likewise for other algebraic groups.
In \cite{Yu01}, Jiu-Kang Yu gives a very general construction of
a class of supercuspidal representations of $G$ which he calls \textit{tame}.
A tame supercuspidal representation $\pi=\pi_{\Sigma}$ of $G$ is
constructed out of a depth zero supercuspidal representation $\pi_{0}$
of $G^{0}$ and some additional data, where ${\bf G}^{0}$ is a \textit{twisted}
Levi subgroup of ${\bf G}$. By twisted, we mean that ${\bf G}^{0}\otimes k^{t}$
is a Levi factor of a parabolic subgroup of ${\bf G}\otimes k^{t}$.
The additional data, together with ${\bf G}^{0}$ and $\pi_{0}$ is
what we are denoting by $\Sigma$ in the notation $\pi_{\Sigma}$.
In \cite{Kim07}, Kim showed that under certain hypothesis, which
are met for instance when the residue characteristic is large, these
tame supercuspidals exhaust all the supercuspidals of $G$. 

The depth zero supercuspidal $\pi_{0}$ of $G^{0}$ is compactly induced
from $(K^{0},\varrho_{0})$ where $K^{0}$ is a compact mod center
open subgroup of $G^{0}$ and $\varrho_{0}$ is a representation of
$K^{0}$. The constructed representation $\pi_{\Sigma}$ is compactly
induced from $(K,\varrho)$, where $K$ is a compact mod center open
subgroup of $G$ containing $K^{0}$ and $\varrho$ is a representation
of $K$. The representation $\varrho$ is of the form $\varrho_{0}\otimes\kappa$,
where $\varrho_{0}$ is seen as a representation of $K$ by extending
from $K^{0}$ ``trivially'' (see \cite[Sec. 4]{Yu01}) and $\kappa$
is a representation of $K$ constructed out of the part of $\Sigma$
which is independent of $\varrho_{0}$. 

Let $\mathfrak{Z}^{\pi}$ (resp. $\mathfrak{Z}_{0}^{\pi_{0}}$) denote
the \textit{Bernstein center} of the\textit{ Bernstein block} (see
Section \ref{sec:Bernstein} for these terms) of $G$ (resp. $G^{0}$)
containing $\pi$ (resp. $\pi_{0}$). Under certain hypothesis $C(\overrightarrow{{\bf G}})$
\cite[Page 47]{HM08}, we show that:
\begin{thm*}
\textup{$\mathfrak{Z}^{\pi}\cong\mathfrak{Z}_{0}^{\pi_{0}}$. }Thus,
the Bernstein center of a tame supercuspidal block of $G$ is isomorphic
to the Bernstein center of a depth zero supercuspidal block of a twisted
Levi subgroup of $G$.
\end{thm*}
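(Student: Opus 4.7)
The plan is to realize both Bernstein centers as rings of regular functions on explicit quotient tori and to compare them via the restriction map coming from the twisted Levi inclusion ${\bf G}^{0}\subset{\bf G}$. Recall that for a supercuspidal representation $\sigma$ of a $p$-adic group $H$, the Bernstein component $\mathcal{R}^{\sigma}(H)$ is generated by the unramified twists $\{\sigma\otimes\chi:\chi\in X_{\nr}(H)\}$, and standard Bernstein theory identifies its center with the ring of regular functions on the affine quotient $X_{\nr}(H)/\stb(\sigma)$, where $\stb(\sigma)=\{\chi\in X_{\nr}(H):\sigma\otimes\chi\cong\sigma\}$ is a finite subgroup. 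Applying this to $\pi$ and to $\pi_{0}$, it suffices to produce an isomorphism of algebraic varieties between $X_{\nr}(G)/\stb(\pi)$ and $X_{\nr}(G^{0})/\stb(\pi_{0})$.

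The natural candidate is the restriction map $r:X_{\nr}(G)\to X_{\nr}(G^{0})$, $\chi\mapsto\chi|_{G^{0}}$. Because ${\bf G}^{0}\otimes k^{t}$ is a Levi subgroup of ${\bf G}\otimes k^{t}$, the two groups share the maximal torus used in Yu's construction, so $r$ is a well-defined homomorphism of complex tori with finite kernel and image of finite index. A short diagram chase shows that it is enough to verify two assertions: (i) $\stb(\pi)=r^{-1}(\stb(\pi_{0}))$, and (ii) the image $r(X_{\nr}(G))$ together with $\stb(\pi_{0})$ generates $X_{\nr}(G^{0})$. Granting (i) and (ii), the map $r$ descends to an isomorphism $X_{\nr}(G)/\stb(\pi)\xrightarrow{\sim}X_{\nr}(G^{0})/\stb(\pi_{0})$, and taking rings of regular functions gives $\mathfrak{Z}^{\pi}\cong\mathfrak{Z}_{0}^{\pi_{0}}$.

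Both (i) and (ii) are intertwining statements. Writing $\pi=\text{c-Ind}_{K}^{G}(\varrho_{0}\otimes\kappa)$ and twisting by $\chi\in X_{\nr}(G)$ gives $\pi\otimes\chi\cong\text{c-Ind}_{K}^{G}(\varrho_{0}\otimes\kappa\otimes\chi|_{K})$. The intertwining analysis of Hakim--Murnaghan \cite{HM08} under the hypothesis $C(\overrightarrow{{\bf G}})$ forces any intertwiner between $\varrho_{0}\otimes\kappa$ and its twist to preserve the $\kappa$-factor up to an inner automorphism coming from $K^{0}$. Consequently, $\pi\otimes\chi\cong\pi$ reduces to $\pi_{0}\otimes r(\chi)\cong\pi_{0}$, proving (i); and an unramified character of $G^{0}$ that lies outside $\image(r)\cdot\stb(\pi_{0})$ would produce an extra symmetry of $\pi$ forbidden by the same intertwining analysis, giving (ii).

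The main obstacle is the intertwining identification underlying (i). The piece $\kappa$ in Yu's construction is built inductively from the generic characters $\phi_{i}$ of the intermediate twisted Levis ${\bf G}^{i}$, and one must carefully track how $\chi|_{K}$ interacts with each $\phi_{i}$ without disturbing the isomorphism class of $\kappa$. The hypothesis $C(\overrightarrow{{\bf G}})$ is designed precisely so that the Hakim--Murnaghan intertwining sets have the expected size; once these facts are in hand, the rest of the argument is a clean commutative-algebra extraction of the Bernstein center from the quotient variety.
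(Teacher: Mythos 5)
Your strategy of realizing both Bernstein centers as rings of regular functions on the quotient tori $X_{\nr}(G)/\stb(\pi)$ and $X_{\nr}(G^{0})/\stb(\pi_{0})$ and comparing them via the restriction map $r$ is essentially the paper's argument, just phrased in terms of the parametrizing tori rather than directly as a bijection between $\Irr^{\mathfrak{s}}(G)$ and $\Irr^{\mathfrak{s}_{0}}(G^{0})$; your (i) is exactly what the paper proves for well-definedness and injectivity of its map $\mathfrak{f}_{\Sigma}$, and your (i) argument via Hakim--Murnaghan's intertwining result, while compressed, is the right route (the paper's version passes through Yu's Prop.\ 4.4/4.1 to move the intertwiner into $G^{0}$).

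The genuine gap is your claim (ii). The statement ``an unramified character of $G^{0}$ outside $\image(r)\cdot\stb(\pi_{0})$ would produce an extra symmetry of $\pi$ forbidden by the intertwining analysis'' is not an argument: nothing in the intertwining theory for $\pi$ directly prohibits the existence of a $\nu\in X_{\nr}(G^{0})$ that fails to match any $r(\chi)$ modulo $\stb(\pi_{0})$, since $r$ is only an isogeny and its image may have genuinely positive index. The paper proves surjectivity by a different and necessary idea: given $\nu\in X_{\nr}(G^{0})$, form $\Sigma_{\nu}$ by replacing $\rho$ with $\rho\otimes(\nu|K^{0})$; because $\nu$ is unramified it is trivial on $\oX{K}{0}$, so $\rho_{\Sigma_{\nu}}$ and $\rho_{\Sigma}$ restrict to the same representation of the maximal compact $\oX{K}{d}$, hence determine the same type $(\oX{K}{d},\oX{\rho}{}_{\Sigma})$; since this is an $\mathfrak{s}$-type, $\pi_{\Sigma_{\nu}}$ lies in the block $\mathfrak{s}$ and is therefore an unramified $G$-twist of $\pi_{\Sigma}$. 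That is how one sees $\image(r)\cdot\stb(\pi_{0})=X_{\nr}(G^{0})$, and without it your proof is incomplete.

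A smaller inaccuracy: ``the two groups share the maximal torus'' is not why $r$ has finite kernel and finite cokernel. What actually makes $r$ an isogeny is the condition, built into the definition of a generic $\mathbf{G}$-datum, that $\mathbf{Z}_{\mathbf{G}^{0}}/\mathbf{Z}_{\mathbf{G}}$ is anisotropic; this forces $X_{\nr}(G)$ and $X_{\nr}(G^{0})$ to have the same dimension.
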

Let $\mathcal{H}(G,\oX{\varrho}{})$ (resp. $\mathcal{H}(G^{0},\oX{\varrho}{}_{0})$)
denote the Hecke algebra of the type constructed out of $(K,\varrho)$
(resp. $(K^{0},\varrho_{0})$) (see Sec. \ref{sub:Hecke-algebra}).
As a consequence of the above theorem, we obtain 
\[
Z(\mathcal{H}(G,\oX{\varrho}{}))\cong Z(\mathcal{H}(G^{0},\oX{\varrho}{}_{0})),
\]
 where $Z(\mathcal{H}(G,\oX{\varrho}{}))$ (resp. $Z(\mathcal{H}(G^{0},\oX{\varrho}{}_{0}))$)
denotes the center of $\mathcal{H}(G,\oX{\varrho}{})$ (resp. $\mathcal{H}(G^{0},\oX{\varrho}{}_{0})$). 

In \cite[Conj. 0.2]{Yu01}, Yu conjectures that $\mathcal{H}(G,\oX{\varrho}{})\cong\mathcal{H}(G^{0},\oX{\varrho}{}_{0})$.
This is a special case of his more general conjecture \cite[Conj. 17.7]{Yu01}.
Assuming certain conditions on $\pi_{\Sigma}$ (\cite[Sec. 5.5]{BK98})
which are satisfied quite often, for instance whenever $\pi_{\Sigma}$
is generic, in Theorem \ref{cor:yu-conj} we show that 
\[
\mathcal{H}(G,\oX{\varrho}{})\cong\mathcal{H}(G^{0},\oX{\varrho}{}_{0}).
\]

\section{\label{sec:Notations}Notations}

Throughout this article, $k$ denotes a non-archimedean local field.
For an algebraic group ${\bf G}$ defined over $k$, we will denote
its $k$-rational points by $G$. We will follow standard abuses of
notation and terminology and refer, for example, to parabolic subgroups
of $G$ in place of $k$-points of $k$-parabolic subgroups of ${\bf G}$.
Center of ${\bf G}$ will be denoted by ${\bf Z}_{{\bf G}}$. The
category of smooth representations of $G$ will be denoted by $\mathfrak{R}(G)$.
If $K$ is a subgroup of $G$ and $g\in G$, we denote $gKg^{-1}$
by $\tensor[^{g}]{K}{}$. If $\rho$ is a complex representation of
$K$, $\tensor[^{g}]{\rho}{}$ denotes the representation $x\mapsto\rho(g^{-1}xg)$
of $\tensor[^{g}]{K}{}$. For $g\in G$, we say that $g$ \textit{intertwines}
$\rho$ if the vector space $\operatorname{Hom}_{\tensor[^{g}]{K}{}\cap K}(\tensor[^{g}]{\rho}{},\rho)$
is non-zero.

\section{\label{sec:Yu}Yu's construction \cite{Yu01}}

Let ${\bf G}$ be a connected reductive group defined over a non-archimedean
local field $k$. A twisted $k$-Levi subgroup ${\bf G}^{\prime}$
of ${\bf G}$ is a reductive $k$-subgroup such that ${\bf G}^{\prime}\otimes_{k}\bar{k}$
is a Levi subgroup of ${\bf G}\otimes_{k}\bar{k}$. Yu's construction
involves the notion of a generic ${\bf G}$-datum. It is a quintuple
$\Sigma=(\overrightarrow{{\bf G}},y,\overrightarrow{r},\overrightarrow{\phi},\rho)$
satisfying the following:
\begin{enumerate}
\item $\overrightarrow{{\bf G}}=({\bf G}^{0}\subsetneq{\bf G}^{1}\subsetneq\ldots\subsetneq{\bf G}^{d}={\bf G})$
is a tamely ramified twisted Levi sequence such that ${\bf Z}_{{\bf G}^{0}}/{\bf Z}_{{\bf G}}$
is anisotropic. 
\item $y$ is a point in the extended Bruhat-Tits building of ${\bf G}^{0}$
over $k$. 
\item $\overrightarrow{r}=(r_{0},r_{1},\cdots,r_{d-1},r_{d})$ is a sequence
of positive real numbers with $0<r_{0}<\cdots<r_{d-2}<r_{d-1}\leq r_{d}$
if $d>0$, $0\leq r_{0}$ if $d=0$.
\item $\overrightarrow{\phi}=(\phi_{0},\cdots,\phi_{d})$ is a sequence
of quasi-characters, where $\phi_{i}$ is a $G^{i+1}$-\textit{generic
quasi-character} \cite[ Sec. 9]{Yu01} of $G^{i}$; $\phi_{i}$ is
trivial on $G_{y,r_{i}+}^{i}$, but nontrivial on $G_{y,r_{i}}^{i}$
for $0\leq i\leq d-1$. If $r_{d-1}<r_{d}$, $\phi_{d}$ is nontrivial
on $G_{y,r_{d}}^{i}$ and trivial on $G_{y,r_{d}+}^{d}$. Otherwise,
$\phi_{d}=1$. Here $G_{y,r}^{i}$ denote the filtration subgroups
of the parahoric at $y$ defined by Moy-Prasad (see \cite[Sec. 2.6]{MP94}). 
\item $\rho$ is an irreducible representation of $G_{[y]}^{0}$, the stabilizer
in $G^{0}$ of the image $[y]$ of $y$ in the reduced building of
${\bf G}^{0}$, such that $\rho|G_{y,0+}^{0}$ is isotrivial and $c\Ind_{G_{[y]}^{0}}^{G^{0}}\rho$
is irreducible and supercuspidal. 
\end{enumerate}
Let $K^{0}=G_{[y]}^{0}$, $K^{0+}=G_{y,0+}^{0}$, $K^{i}=G_{[y]}^{0}G_{y,s_{0}}^{1}\cdots G_{y,s_{i-1}}^{i}$
and $K^{i+}=G_{[y]}^{0}G_{y,s_{0}+}^{1}\cdots G_{y,s_{i-1}+}^{i}$
where $s_{j}=r_{j}/2$ for $i=1,\ldots,d$. In \cite[Sec. 11]{Yu01},
Yu constructs certain representation $\kappa$ of $K^{d}=K^{d}(\Sigma)$
which is independent of $\rho$ and constructed only out of $(\overrightarrow{{\bf G}},y,\overrightarrow{r},\overrightarrow{\phi})$.
He defines certain subgroups $J^{i}=({\bf G}^{i-1},{\bf G}^{i})(k)_{y,(r_{i-1},s_{i-1})}$
and $J^{i+}=({\bf G}^{i-1},{\bf G}^{i})(k)_{y,(r_{i-1},s_{i-1}+)}$
for $1\leq i\leq d$ (see \cite[Sec. 3]{Yu01} for the meaning of
notations used here). For these groups, one has 
\[
K^{i-1}J^{i}=K^{i},\qquad K^{(i-1)+}J^{i+}=K^{i+}.
\]
Also, $K^{i-1}\cap J^{i}\subset K^{(i-1)+}$. Since $\rho$ is iso-trivial
on $K^{0+}$, one can successively inflate the representation $\rho$
of $K^{0}$ to a representation of $K^{d}$, which we again denote
by $\rho$, via the maps 
\[
K^{i}\twoheadrightarrow K^{i-1}J^{i}/J^{i}=K^{i-1}/(K^{i-1}\cap J^{i})
\]
(see \cite[Sec. 4]{Yu01} for details). Write $\rho_{\Sigma}:=\rho\otimes\kappa$. 
\begin{thm}
[Yu]$\pi_{\Sigma}:=c\Ind_{K^{d}}^{G}\rho_{\Sigma}$ is irreducible
and thus supercuspidal. 
\end{thm}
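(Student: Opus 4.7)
The plan is to deduce the irreducibility of $\pi_\Sigma$ from the standard criterion for compactly induced representations: if $K \subset G$ is open and compact modulo the centre of $G$, and $\sigma$ is an irreducible smooth representation of $K$, then $c\Ind_K^G \sigma$ is irreducible (and automatically supercuspidal) provided the intertwining set
\[
I_G(\sigma) = \{ g \in G : \Hom_{{}^{g}K \cap K}({}^{g}\sigma, \sigma) \neq 0 \}
\]
coincides with $K$. Here $K^d$ is compact modulo ${\bf Z}_{{\bf G}}$ thanks to the anisotropy of ${\bf Z}_{{\bf G}^0}/{\bf Z}_{{\bf G}}$ in item (1) of the datum. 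So the task splits into two steps: (i) show that $\rho_\Sigma = \rho \otimes \kappa$ is irreducible as a representation of $K^d$, and (ii) show that $I_G(\rho_\Sigma) = K^d$.

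For step (i), the inflated $\rho$ is irreducible on $K^d$ because the inflation maps through the quotients $K^i/(K^{i-1} \cap J^i)$ are surjective, as recorded in the setup. Yu's $\kappa$ is assembled layer by layer: each $\kappa_i$ is a Heisenberg--Weil-type representation attached to the symplectic space $J^{i+1}/J^{i+1,+}$, whose symplectic form is induced by $\phi_i$ and is non-degenerate precisely because $\phi_i$ is $G^{i+1}$-generic. One then checks that $\rho$ and the $\kappa_i$ act on ``disjoint'' pieces with respect to the filtration (using $K^{i-1} \cap J^i \subset K^{(i-1)+}$ and the trivialities recorded in item (4) of the datum), which lets the irreducibility of the tensor product be reduced to the irreducibility of each constituent. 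This part is a careful but essentially routine analysis using Moy--Prasad bookkeeping.

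For step (ii), I would induct on $d$. The base case $d=0$ is part of the datum, since $\rho_\Sigma = \rho$ and $c\Ind_{K^0}^{G^0} \rho$ is assumed irreducible and supercuspidal, forcing $I_{G^0}(\rho) = K^0$. For the inductive step, suppose $g \in G$ intertwines $\rho_\Sigma$. Restricting the intertwining to the deepest Heisenberg layer $J^d/J^{d+}$, the $G^d$-genericity of $\phi_{d-1}$ forces $g$ to stabilise the character $\phi_{d-1}$ modulo a deep Moy--Prasad subgroup, which using the Bruhat--Tits structure of ${\bf G}^d$ locates $g$ in $K^{d-1} \cdot K^d$ up to an element of $K^d$. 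One then verifies that, modulo an element of $K^d$, such a $g$ intertwines $\rho_{\Sigma'}$ where $\Sigma'$ is the truncated datum $(\overrightarrow{{\bf G}}|_{\leq d-1}, y, \overrightarrow{r}|_{<d}, \overrightarrow{\phi}|_{<d}, \rho)$, and the inductive hypothesis places $g$ in $K^{d-1} \subset K^d$.

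The main obstacle is exactly the reduction in step (ii) from intertwining in $G$ to intertwining in $G^{d-1}$. The genericity hypothesis on $\phi_{d-1}$ must be used in its full strength, and one has to show that no mixing of distinct filtration layers is possible for an intertwiner -- this is the content of Yu's ``generic key lemma''-type computations. Concretely, one extracts from the intertwining relation a character identity on a Moy--Prasad quotient and invokes non-degeneracy of the symplectic form induced by $\phi_{d-1}$ to collapse the intertwiner into the smaller twisted Levi. Carrying this out cleanly requires tracking several subgroups $K^i, K^{i+}, J^i, J^{i+}$ simultaneously, and is where the combinatorics of the construction bites.
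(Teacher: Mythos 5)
The paper offers no proof of this theorem; it simply cites Yu's original article. Your outline correctly reproduces the strategy that Yu actually follows: reduce to the standard criterion that $c\Ind_{K}^{G}\sigma$ is irreducible and supercuspidal when $K$ is open and compact modulo centre, $\sigma$ is irreducible, and the intertwining set $I_{G}(\sigma)$ is exactly $K$; then establish irreducibility of $\rho_{\Sigma}=\rho\otimes\kappa$ layer by layer (the inflated $\rho$ stays irreducible because the relevant maps are surjections, and the Heisenberg--Weil constituents of $\kappa$ are handled on the $J^{i}/J^{i+}$ quotients), and finally pin down $I_{G}(\rho_{\Sigma})=K^{d}$ by descending the twisted Levi tower, using genericity of the $\phi_{i}$ as the key input. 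The base case $d=0$ is indeed supplied by the datum, since irreducibility of $c\Ind_{K^{0}}^{G^{0}}\rho$ forces $I_{G^{0}}(\rho)=K^{0}$ by Mackey theory.

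Two cautions. First, a small imprecision: when the genericity argument constrains an intertwiner $g$, the conclusion is that $g$ lies in a double coset $K^{d}G^{d-1}K^{d}$, not in ``$K^{d-1}\cdot K^{d}$'' (which is just $K^{d}$); it is this double-coset reduction that lets you replace $g$ by a representative in $G^{d-1}$ and invoke the inductive hypothesis for the truncated datum. Second, be aware that the step you correctly identify as the crux --- the intertwining computation for the Heisenberg--Weil layers via the ``generic key lemma'' --- is exactly where a gap in Yu's original proof (in the appeal to Gérardin's results on Weil representations in his Sections 14--15) was later found; the theorem remains true, with corrected arguments supplied by Fintzen (and by Fintzen--Kaletha--Spice). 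So your plan is the right plan, and the combinatorics you flag as delicate really is the locus of the difficulty, but any full write-up should follow one of the corrected treatments rather than Yu's Section 14 verbatim.
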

The following theorem of Kim \cite{Kim07} says that under certain
hypothesis (which are met for instance when the residue characteristic
is sufficiently large), the representations $\pi_{\Sigma}$ for various
generic ${\bf G}$-datum $\Sigma$ exhaust all the supercuspidal representations
of $G$. 
\begin{thm}
[Ju-Lee Kim]\label{thm:Kim}Suppose the hypothesis $(\ona{H}k)$,
$(\ona{HB})$, $(\ona{HGT})$ and $(\ona{H}\mathcal{N})$ in \cite{Kim07}
are valid. Then all the supercuspidal representations of $G$ arise
through Yu's construction. 
\end{thm}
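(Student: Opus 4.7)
The plan is to reverse-engineer a generic $\mathbf{G}$-datum $\Sigma$ from a given irreducible supercuspidal representation $\pi$ of $G$ by induction on the Moy-Prasad depth. The base case $\depth(\pi)=0$ is the depth-zero exhaustion of Moy-Prasad/Morris: one produces a point $y$ in the building of $\mathbf{G}$ and an irreducible representation $\rho$ of $G_{[y]}$, inflated on $G_{y,0+}$ from a cuspidal representation of the reductive quotient, such that $\pi\cong c\Ind_{G_{[y]}}^{G}\rho$. This is precisely a Yu datum with $d=0$ and $\mathbf{G}^{0}=\mathbf{G}$, and it is unconditional.

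For $\depth(\pi)=r>0$, the heart of the argument is to peel off the top layer. One first extracts, via Moy-Prasad, an unrefined minimal $K$-type $(\mathbf{G}_{y,r},\chi)$ occurring in $\pi$. The crucial step is to show that $\chi$ can be made \emph{generic} in the sense of \cite[Sec.~9]{Yu01}: after conjugation if necessary, the dual coset in $\mathfrak{g}_{y,r}^{\ast}/\mathfrak{g}_{y,r+}^{\ast}$ is represented by an element $X^{\ast}$ whose connected centraliser $\mathbf{G}^{\ast}:=C_{\mathbf{G}}(X^{\ast})^{\circ}$ is a tamely ramified twisted Levi subgroup of $\mathbf{G}$, and $X^{\ast}$ satisfies Yu's axioms (GE1), (GE2). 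This is the step where the four hypotheses bite. Specifically, $(\ona{HGT})$ supplies a \emph{generic tame} representative in every relevant orbit, $(\ona{H}\mathcal{N})$ controls the nilpotent contribution to the Jordan decomposition of $X^{\ast}$, while $(\ona{H}k)$ and $(\ona{HB})$ ensure that the Moy-Prasad filtrations on $\mathbf{G}^{\ast}$ are compatible with those on $\mathbf{G}$ and that mock-exponential maps are available on the relevant Lie-algebra layers.

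Once $\mathbf{G}^{\ast}$ and a generic character $\phi$ of $G^{\ast}$ realising $\chi$ are in hand, one runs a Mackey-Frobenius analysis on the subgroup $K=G_{[y]}^{\ast}\,G_{y,r/2}$. Combining Yu's construction of $\kappa$ with the Howe/Kim-Murnaghan factorisation produces an isotypic decomposition showing that $\pi|_{K}$ contains a summand of the form $\rho_{0}^{\prime}\otimes\kappa$, where $\rho_{0}^{\prime}$ is inflated from a representation of $G_{[y]}^{\ast}$, and the resulting representation $\pi_{0}^{\prime}:=c\Ind_{G_{[y]}^{\ast}}^{G^{\ast}}\rho_{0}^{\prime}$ is an irreducible supercuspidal of $G^{\ast}$ whose depth is strictly smaller than~$r$. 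Depths of admissible representations in tamely ramified groups lie in a discrete set, so the induction is well-founded.

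Applying the inductive hypothesis to $\pi_{0}^{\prime}$ on $G^{\ast}$ yields a generic $\mathbf{G}^{\ast}$-datum $\Sigma^{\prime}=(\overrightarrow{\mathbf{G}^{\prime}},y,\overrightarrow{r^{\prime}},\overrightarrow{\phi^{\prime}},\rho)$ realising $\pi_{0}^{\prime}$; appending the top step $(\mathbf{G}^{\ast}\subsetneq\mathbf{G},\,r,\,\phi)$ produces the desired $\Sigma$ with $\pi_{\Sigma}\cong\pi$. The main obstacle -- and the reason all four hypotheses are invoked -- is unmistakably the genericity step: in small residue characteristic, a minimal $K$-type need not be generic, and its associated centraliser need not be a tamely ramified twisted Levi. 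Controlling the semisimple-versus-nilpotent structure in $\mathfrak{g}^{\ast}$ via $(\ona{H}\mathcal{N})$ and $(\ona{HGT})$, together with the filtration compatibilities of $(\ona{H}k)$ and $(\ona{HB})$, is what makes the descent both executable at each step and guaranteed to terminate at a depth-zero datum.
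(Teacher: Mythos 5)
The paper does not prove this statement; it is cited verbatim to \cite{Kim07} and used as a black box. So the only sensible comparison is against Kim's actual argument, and there your sketch misrepresents the proof in an essential way.

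Kim's exhaustion theorem is \emph{not} proved by an inductive descent that peels off one Yu layer at a time. Her argument is a harmonic-analysis density argument. Very roughly: she decomposes the Lie algebra (dually, the coadjoint side) into ``good'' and ``degenerate'' cosets, uses Murnaghan--Kirillov-type asymptotic expansions of characters and Fourier transforms of orbital integrals, and shows by a Plancherel/counting computation that the formal degrees and matrix coefficients of Yu's supercuspidals, together with the contribution of the non-supercuspidal spectrum, already saturate a delta-like distribution supported near the identity. Since this accounting closes, there is no room in the Plancherel measure for a supercuspidal outside Yu's list. The four hypotheses $(\ona{H}k)$, $(\ona{HB})$, $(\ona{HGT})$, $(\ona{H}\mathcal{N})$ enter precisely to make this analytic machinery (mock exponentials, good coset classification, nilpotent orbital integral control, character asymptotics) available; they are not invoked in the ``make the top $K$-type generic, then recurse'' way you describe.

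The strategy you sketch --- extract an unrefined minimal $K$-type, conjugate to a Yu-generic representative, cut out a twisted Levi $\mathbf{G}^{\ast}$, produce a lower-depth $\pi_{0}^{\prime}$ on $G^{\ast}$, and induct --- is the ``direct'' route, much closer in spirit to Fintzen's later proof of exhaustion (under the weaker hypothesis that $p$ does not divide the order of the Weyl group) than to Kim's. Even as a proof outline on its own terms it is incomplete at exactly the hard points: you do not explain why after conjugation the dual coset admits a \emph{generic} representative whose centralizer is a tame twisted Levi (this can genuinely fail and is the technical heart of the matter), why the descended $\pi_{0}^{\prime}$ is irreducible and supercuspidal on $G^{\ast}$, or why the full tower one accumulates, not just the top step, satisfies Yu's genericity axioms. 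Those are real gaps, not details. So: the sketch is a plausible-looking outline of a \emph{different} (and later, independently nontrivial) proof, not the one the cited theorem rests on, and it leaves the decisive steps unjustified.
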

In \cite[Theorem 6.6, 6.7]{HM08} under certain hypothesis denoted
by $C(\overrightarrow{{\bf G}})$ \cite[Page 47]{HM08}, Hakim and
Murnaghan determine when two supercuspidal representations are equivalent:
\begin{thm}
[Hakim-Murnaghan]\label{thm:HM}Let $\Sigma=(\overrightarrow{{\bf G}},y,\overrightarrow{r},\overrightarrow{\phi},\rho)$
and $\Sigma^{\prime}=(\overrightarrow{{\bf G}}^{\prime},y^{\prime},\overrightarrow{r}^{\prime},\overrightarrow{\phi}^{\prime},\rho^{\prime})$
be two generic $G$-data. Set $\phi=\Pi_{i=1}^{d}\phi_{i}|G^{0}$,
$\phi^{\prime}=\Pi_{i=1}^{d^{\prime}}\phi_{i}^{\prime}|G^{0\prime}$,
$\pi_{0}=c\Ind_{G_{[y]}^{0}}^{G^{0}}\rho$ and $\pi_{0}^{\prime}=c\Ind_{G_{[y^{\prime}]}^{0^{\prime}}}^{G^{0\prime}}\rho^{\prime}$.
Then $\pi_{\Sigma}\cong\pi_{\Sigma^{\prime}}$ if and only if there
exists $g\in G$ such that $\tensor[]{K^{d}(\Sigma)}{}=\tensor[^{g}]{K^{d^{\prime}}(\Sigma^{\prime})}{}$
and $\rho_{\Sigma}=\tensor[^{g}]{\rho}{_{\Sigma^{\prime}}}$ if and
only if $G^{0}={}^{g}G^{0\prime}$ and $\pi_{0}\otimes\phi\cong\tensor[^{g}]{(\pi_{0}^{\prime}\otimes\phi^{\prime})}{}$. 
\end{thm}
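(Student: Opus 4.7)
The plan is to prove the chain of equivalences between the three conditions: (a) $\pi_{\Sigma} \cong \pi_{\Sigma'}$, (b) there exists $g \in G$ with $K^{d}(\Sigma) = {}^{g}K^{d'}(\Sigma')$ and $\rho_{\Sigma} = {}^{g}\rho_{\Sigma'}$, and (c) $G^{0} = {}^{g}G^{0\prime}$ and $\pi_{0} \otimes \phi \cong {}^{g}(\pi_{0}^{\prime} \otimes \phi^{\prime})$. I would establish (a) $\Leftrightarrow$ (b) and (b) $\Leftrightarrow$ (c) separately, and the theorem then follows.

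For (a) $\Leftrightarrow$ (b), the direction (b) $\Rightarrow$ (a) is immediate from conjugation invariance of compact induction. For the converse, since $\pi_{\Sigma}$ and $\pi_{\Sigma'}$ are compactly induced from compact-mod-center open subgroups $K^{d}$ and $K^{d'}$, Frobenius reciprocity together with the Mackey decomposition forces any isomorphism $\pi_{\Sigma} \cong \pi_{\Sigma'}$ to produce an element $g \in G$ with $\Hom_{K^{d} \cap {}^{g}K^{d'}}(\rho_{\Sigma}, {}^{g}\rho_{\Sigma'}) \neq 0$. The delicate step is upgrading this intertwining to an actual equality of supporting subgroups and inducing representations. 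I would do this by combining Yu's analysis of the intertwining set of $\kappa$ with the standard depth-zero theory controlling the intertwining of $\rho$ inflated from $G_{[y]}^{0}$; together these should force the $G$-intertwiner of $\rho_{\Sigma}$ to equal $K^{d}$, so $g$ must conjugate one type precisely onto the other.

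For (b) $\Leftrightarrow$ (c), I would track the data through Yu's construction. The central observation is that the effect of $\kappa$ on the $G^{0}$ inducing data is exactly a twist by the character $\phi = \prod_{i=1}^{d} \phi_{i}|_{G^{0}}$; concretely, $\rho_{\Sigma}|_{K^{0}}$ differs from $\rho$ by tensoring with $\phi|_{K^{0}}$. Compact induction from $K^{0}$ to $G^{0}$ together with the projection formula then yields $\pi_{0} \otimes \phi$, giving (b) $\Rightarrow$ (c). For the reverse implication, $\pi_{0}$ recovers $\rho$ up to $G^{0}$-conjugacy, after which the extension of $\rho$ through the tower of $J^{i}$'s and the reconstruction of the Heisenberg-Weil part of $\kappa$ from $\overrightarrow{\phi}$ are canonical modulo $G$-conjugacy under the hypotheses $C(\overrightarrow{{\bf G}})$.

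The main obstacle is the rigidity step in (a) $\Rightarrow$ (b): one must argue that an intertwining element for $\rho_{\Sigma}$ in fact conjugates the full supporting subgroup and the entire tower of Moy-Prasad data. This requires careful analysis of the Heisenberg-Weil construction underlying $\kappa$, together with a comparison of the Moy-Prasad filtrations at $y$ and $y'$ that forces the two twisted Levi sequences $\overrightarrow{{\bf G}}$ and $\overrightarrow{{\bf G}}'$ to be $G$-conjugate in a compatible way. The hypotheses $C(\overrightarrow{{\bf G}})$ are imposed precisely to make this rigidity argument go through and to ensure that conditions (b) and (c) encode equivalent information.
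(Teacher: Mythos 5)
The paper does not prove Theorem~\ref{thm:HM}; it is quoted verbatim as a known result from \cite[Theorems 6.6 and 6.7]{HM08}, so there is no in-paper proof to compare your attempt against.

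That said, your sketch contains a genuine error in the second equivalence. You assert that ``$\rho_{\Sigma}|_{K^{0}}$ differs from $\rho$ by tensoring with $\phi|_{K^{0}}$,'' i.e.\ that $\kappa|_{K^{0}}$ is just the character $\phi|_{K^{0}}$. This is false in general: $\kappa|_{K^{0}}$ also carries the Weil-representation factors attached to the Heisenberg quotients $J^{i}/J^{i+}$, on which $K^{0}$ acts symplectically, and these are typically of dimension $q^{n_i}>1$. So there is no direct identity of inducing data that collapses $\rho_{\Sigma}$ to a twist of $\rho$ by $\phi$, and your ``projection formula'' step does not go through. Hakim and Murnaghan's actual route to the equivalence between the second and third conditions is through their notion of \emph{refactorization}: one is allowed to move a character between $\rho$ and the $\phi_i$'s (and between consecutive $\phi_i$'s) in a way that leaves $\rho_{\Sigma}$ unchanged up to equivalence, and, combined with elementary transformations and $G$-conjugation, these moves generate the full equivalence $\pi_{\Sigma}\cong\pi_{\Sigma'}$. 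The product $\phi=\prod_i\phi_i|_{G^0}$, paired with $\pi_0$, is precisely the invariant of a refactorization class; that is how $\pi_{0}\otimes\phi$ enters, not through a restriction identity for $\kappa$. Your sketch of the first equivalence via Mackey theory points in the right direction, but, as you acknowledge, it hand-waves past the rigidity step; in \cite{HM08} that step relies on Yu's concrete intertwining results for $\kappa$ (Propositions 4.1 and 4.4 and their descendants), not merely the general Frobenius--Mackey formalism.
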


\section{\label{sec:Bernstein}Bernstein center}

\subsection{Bernstein decomposition}

Let $X_{k}({\bf G})=\Hom({\bf G},\mathbb{G}_{m})$, the lattice of
$k$-rational characters of ${\bf G}$. Let 
\[
\tensor[^{\circ}]{G}{}:=\{g\in G:\val_{k}(\chi(g))=0,\forall\chi\in X_{k}({\bf G})\}.
\]

In \cite[Section 7]{Kott97}, Kottwitz defined a functorial homomorphism
$\kappa_{G}^{\prime}:G\twoheadrightarrow X_{*}({\bf Z}_{{\bf G}})_{I_{k}}^{\Fr}$.
Here $X_{*}({\bf Z}_{{\bf G}})$ denotes the co-character lattice
of ${\bf Z}_{{\bf G}}$, $(\cdot)^{\Fr}$ (resp. $(\cdot)_{I_{k}}$)
denotes taking invariant (resp. coinvariant) with respect to Frobenius
$\Fr$ (resp. inertia subgroup $I_{k}$). The map $\kappa_{G}^{\prime}$
induces a functorial surjective map: 
\begin{equation}
\kappa_{G}:G\twoheadrightarrow X_{*}({\bf Z}_{{\bf G}})_{I_{k}}^{\Fr}/\tors\label{eq:kott}
\end{equation}
 and $\ker(\kappa_{G})$ is precisely $\tsr$ (see \cite[Sec. 3.3.1]{Hai11}).

Let $X_{\nr}(G):=\Hom(G/\tsr,\mathbb{C}^{\times})$ denote the group
of \textit{unramified characters} of $G$. For a smooth representation
$\pi$ of $G$, the representations $\pi\otimes\chi,$ $\chi\in X_{\nr}(G(k))$
are called the \textit{unramified twists} of $\pi$.

Consider the collection of all cuspidal pairs $(L,\sigma)$ consisting
of a Levi subgroup $L$ of $G$ and an irreducible cuspidal representation
$\sigma$ of $L$. Define an equivalence relation $\sim$ on the class
of all cuspidal pairs by 
\[
(L,\sigma)\sim(M,\tau)\mbox{ if }\tensor[^{g}]{L}{}=M\mbox{ and }\tensor[^{g}]{\sigma}{}\cong\tau\nu,
\]
 for some $g\in G$ and some $\nu\in X_{\nr}(M)$. Write $[L,\sigma]_{G}$
for the equivalence class of $(L,\sigma)$ and $\mathfrak{B}(G)$
for the set of all equivalence classes. The set $\mathfrak{B}(G)$
is called the \textit{Bernstein spectrum }of $G$. We say that a smooth
irreducible representation $\pi$ has \textit{inertial support} $s:=[L,\sigma]_{G}$
if $\pi$ appears as a subquotient of a representation parabolically
induced from some element of $\mathfrak{s}$. Define a full subcategory
$\mathfrak{R}^{\mathfrak{}}(G)^{\mathfrak{s}}$ of $\mathfrak{R}(G)$
as follows: a smooth representation $\pi$ belongs to $\mathfrak{R}(G)^{\mathfrak{s}}$
iff each irreducible subquotient of $\pi$ has inertial support $\mathfrak{s}$.
The categories $\mathfrak{R}^{\mathfrak{}}(G)^{\mathfrak{s}},\mathfrak{s\in\mathfrak{B}}(G)$,
are called the \textit{Bernstein Blocks} of $G$.
\begin{thm}
[Bernstein]\label{thm:B-De}We have 
\[
\mathfrak{R}(G)=\prod_{\mathfrak{s\in\mathfrak{B}}(G)}\mathfrak{R}^{\mathfrak{}}(G)^{\mathfrak{s}}.
\]

\end{thm}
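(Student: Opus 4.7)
The plan is to prove the decomposition in three stages: assigning a cuspidal support to every irreducible representation, establishing orthogonality between distinct blocks, and finally constructing the block projectors. First I would show that every irreducible smooth representation $\pi$ of $G$ admits a cuspidal pair $(L,\sigma)$, unique up to $G$-conjugacy and unramified twist, with $\pi$ a subquotient of some parabolic induction $\ona{i}_P^G(\sigma)$. Existence comes from Jacquet's theorem by iterating the Jacquet functor along a descending chain of parabolics until a cuspidal quotient appears on some Levi. Uniqueness up to inertial equivalence follows from the geometric lemma, which computes $\Hom_G(\ona{i}_P^G(\sigma), \ona{i}_Q^G(\tau))$ via the double coset decomposition $P\backslash G / Q$ and reduces non-vanishing to a $G$-conjugation relating the cuspidal data. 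This assigns to each irreducible $\pi$ a well-defined class $\mathfrak{s}(\pi) = [L,\sigma]_G \in \mathfrak{B}(G)$.

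Second, I would establish orthogonality: for $\mathfrak{s} \neq \mathfrak{s}'$, every $\pi \in \mathfrak{R}(G)^{\mathfrak{s}}$ and $\pi' \in \mathfrak{R}(G)^{\mathfrak{s}'}$ satisfy $\Hom_G(\pi,\pi') = 0$ and $\ona{Ext}^1_G(\pi,\pi') = 0$. The Hom-vanishing reduces via Zorn's lemma to the case of irreducible $\pi$ and $\pi'$, where it follows from the uniqueness just established. The Ext-vanishing requires Bernstein's second adjointness, which equips parabolic induction with a left adjoint in addition to its usual right adjoint. Given an extension $0 \to \pi' \to E \to \pi \to 0$, suitable Jacquet modules combined with the two adjunctions separate the cuspidal supports of $\pi$ and $\pi'$ inside $E$, forcing the extension to split.

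For the decomposition itself, every smooth representation is a filtered colimit of its finitely generated subrepresentations, so the core task is to decompose finitely generated $\pi$. Here one constructs a progenerator of each $\mathfrak{R}(G)^{\mathfrak{s}}$ by parabolically inducing an appropriate finite-dimensional type on a compact-open subgroup of $L$, identifies its endomorphism algebra with a Hecke algebra, and shows this Hecke algebra is a finite module over a Noetherian commutative center indexed by the unramified twists $X_{\nr}(L)/\ona{stab}(\sigma)$. The resulting block idempotents in $\ona{End}(\ona{id}_{\mathfrak{R}(G)})$ then realize the product decomposition. I expect the main obstacle to be exactly this finiteness package: uniform admissibility of parabolic induction as $\sigma$ varies over $[L,\sigma]_G$, together with the Noetherianity of the resulting Hecke algebras. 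Without these structural finiteness results the required central idempotents cannot be extracted, and the formal adjunction-and-geometric-lemma framework established in the first two steps is not enough to produce a decomposition of an arbitrary smooth representation.
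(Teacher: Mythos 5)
The paper states this as Bernstein's theorem without offering a proof, so there is no in-paper argument to compare yours against; it is cited as a known result.

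As a standalone sketch, your outline has the right shape, and you correctly flag that the real weight sits in the last step: Hom- and $\ona{Ext}^1$-orthogonality between Serre subcategories of a Grothendieck abelian category does not by itself produce a product decomposition. But your description of the progenerator is a genuine muddle. Bernstein's progenerator of $\mathfrak{R}(G)^{\mathfrak{s}}$ for $\mathfrak{s}=[L,\sigma]_G$ is $\ona{i}_P^G\bigl(\sigma\otimes\mathbb{C}[X_{\nr}(L)/\ona{stab}(\sigma)]\bigr)$, the parabolic induction of the full unramified family of $\sigma$; it is neither finite-dimensional nor a representation of a compact open subgroup of $L$, so the object you name is not the one whose endomorphism algebra carries the required finiteness. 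The alternative progenerator $c\Ind_{J}^{G}\tau$ attached to an $\mathfrak{s}$-type $(J,\tau)$ (with $J$ compact open in $G$, not in $L$) would also work, but the existence of types for every Bernstein block is a separate and harder theorem, logically downstream of the decomposition, and so cannot be fed back in here without circularity. Once the correct progenerator is in hand, the Noetherianity-and-idempotent argument you sketch is essentially Bernstein's. A secondary point: the $\ona{Ext}^1$-vanishing via second adjointness is a legitimate modern shortcut, but in the classical argument that vanishing is a corollary of the decomposition rather than an input; what the classical proof actually uses is the finiteness of $\ona{End}$ of the progenerator over its Noetherian center together with the fact that any compactly generated smooth representation meets only finitely many inertial classes.
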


\subsection{\label{sub:Hecke-algebra}Hecke algebra}

Let $J$ be a compact open subgroup of $G$ and let $(\tau,W)$ be
an irreducible representation of $J$. We call $(J,\tau)$ a \textit{compact
open datum}. The \textit{Hecke algebra} $\mathcal{H}(G,\tau)$ associated
to a compact open datum $(J,\tau)$ is the space of compactly supported
functions $f:G\rightarrow\ona{End}_{\mathbb{C}}(W)$ such that 
\[
f(j_{1}gj_{2})=\tau(j_{1})f(g)\tau(j_{2}),\qquad j_{1,}j_{2}\in J\mbox{ and }g\in G.
\]
 The standard convolution operation gives $\mathcal{H}(G,\tau)$ the
structure of an associative $\mathbb{C}$-algebra with identity. 

Let $\mathfrak{R}_{\tau}(G)$ be the subcategory of $\mathfrak{R}(G)$
consisting of smooth representations which are generated by their
$\tau$- isotypic component. If $\mathfrak{R}_{\tau}(G)=\mathfrak{R}(G)^{\mathfrak{s}}$
for some $\mathfrak{s}\in\mathfrak{B}(G)$, then we say that $(J,\tau)$
is an $\mathfrak{s}$-\textit{type}. Let $\mathcal{H}(G,\tau)-\mathfrak{Mod}$
denote the category of non-degenerate $\mathcal{H}(G,\tau)$ modules.
If $(J,\tau)$ is an $\mathfrak{s}$-type, then $\mathfrak{R}(G)^{\mathfrak{s}}$
is equivalent to $\mathcal{H}(G,\tau)-\mathfrak{Mod}$.

\subsection{\label{sub:center}The center of $\mathfrak{R}(G)$}

Let $\mathcal{C}$ be an abelian category. The set $\ona{End}_{C}(\ona{id})$
of natural transformations of the identity functor of $\mathcal{C}$
is a ring which by definition is the \textit{center} of $\mathcal{C}$.
Denote it by $\mathfrak{Z}(\mathcal{C})$. Explicitly, $z\in\mathfrak{Z}(\mathcal{C})$
is a collection of morphisms $z_{A}:A\rightarrow A$, one for each
object $A$ in $\mathcal{C}$, such that for any morphism $f:B\rightarrow C$,
the diagram
\[
\xymatrix{B\ar@{->}[r]^{f}\ar@{->}[d]^{z_{B}} & C\ar@{->}[d]^{z_{C}}\\
B\ar@{->}[r]^{f} & C
}
\]
 commutes. 

Let $R$ be a ring with identity. Let $\mathfrak{Z}(R)$ (resp. $\mathfrak{Z}(R-\mathfrak{Mod})$)
denote the center of $R$ (resp. the center of the category of left
$R$-modules). There is a canonical ring isomorphism
\begin{equation}
c\in\mathfrak{Z}(R)\mapsto\mu_{c}\in\mathfrak{Z}(R-\mathfrak{Mod}),\label{eq:ring-isom}
\end{equation}
 where $\mu_{_{c}}$ acts on each left $R$-module $M$ by $\mu_{c}(m)=cm$,
for all $m\in M$ (see \cite[Sec. 1.6.2]{Roche09}).

Let $\mathfrak{s}\in\mathfrak{B}(G)$. The center of $\mathfrak{Z}(G)$
(resp. $\mathfrak{Z}(G)^{\mathfrak{s}}$) of the category $\mathfrak{R}(G)$
(resp. $\mathfrak{R}(G)^{\mathfrak{s}}$) is called the \textit{Bernstein
center}. If $(J,\tau)$ is an $\mathfrak{s}$-type, then $\mathfrak{R}(G)^{\mathfrak{s}}\cong\mathcal{H}(G,\tau)-\mathfrak{Mod}$,
and therefore by Equation (\ref{eq:ring-isom}), there is a canonical
isomorphism
\begin{equation}
\mathfrak{Z}(G)^{\mathfrak{s}}\cong Z(\mathcal{H}(G,\tau)),\label{eq:center=00003DHecke}
\end{equation}
 where $Z(\mathcal{H}(G,\tau))$ denotes the center of $\mathcal{H}(G,\tau)$.

\section{\label{sec:sup-block}Supercuspidal block}

Let ${\bf G}$ be a connected reductive group over $k$. Let $\pi$
be an irreducible supercuspidal representation of $G$ of the form
$\pi=c\Ind_{J}^{G}(\tau)$, where $J$ is an open, compact mod center
subgroup of $G$ and $\tau$ is a representation of $J$. Write $\oX{J}{}=J\cap\oX{G}{}$
and let $\oX{\tau}{}$ be some irreducible component of $\tau|\oX{J}{}$.
Then 
\begin{prop}
\cite[Sec. 5.4]{BK98}The group $\oX{J}{}$ is the unique maximal
compact subgroup of $J$ and $(\oX{J}{},\oX{\tau}{})$ is a $[G,\pi]_{G}$-type
in $G$. 
\end{prop}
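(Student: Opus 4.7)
The plan is to prove the two claims separately, with the inertial class $\mathfrak{s}:=[G,\pi]_G$ playing the central role throughout. For the first claim, I would start from the general observation that $\tsr$ contains every compact subgroup of $G$: for any $k$-rational character $\chi$ of ${\bf G}$ and any compact $C\subset G$, the image $\chi(C)$ is a compact subgroup of $k^\times$, hence lies in the units of $k$, so $\val_k\circ\chi$ vanishes on $C$. Thus every compact subgroup of $J$ lies in $J\cap\tsr=\oX{J}{}$. For the converse direction, that $\oX{J}{}$ itself is compact: since $J$ is compact modulo the center of $G$, the subgroup $\oX{J}{}$ fits in an exact sequence whose kernel ${\bf Z}_{{\bf G}}(k)\cap\oX{J}{}$ is contained in the maximal compact subgroup of ${\bf Z}_{{\bf G}}(k)$ (by the same general fact applied to the center) and whose quotient is a closed subgroup of the compact group $J/(J\cap{\bf Z}_{{\bf G}}(k))$, hence both are compact.

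For the type assertion, since $\pi$ is supercuspidal, $\mathfrak{s}$ consists exactly of the unramified twists $\pi\otimes\chi$ with $\chi\in X_{\nr}(G)$. The crucial compatibility is that every such $\chi$, by its very definition as a character of $G/\tsr$, is trivial on $\oX{J}{}$, so restriction to $\oX{J}{}$ is invariant under unramified twisting: $(\pi\otimes\chi)|_{\oX{J}{}}=\pi|_{\oX{J}{}}$. A Mackey decomposition of $\pi|_{\oX{J}{}}$ contains the summand $\tau|_{\oX{J}{}}$ coming from the trivial double coset $\oX{J}{}\cdot 1\cdot J$, which in turn contains the chosen irreducible component $\oX{\tau}{}$. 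Hence every irreducible representation in $\mathfrak{R}(G)^{\mathfrak{s}}$ has nonzero $\oX{\tau}{}$-isotypic component and, by irreducibility, is generated by it.

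For the reverse inclusion I would show that any irreducible $\sigma\in\mathfrak{R}(G)$ with nonzero $\oX{\tau}{}$-isotypic component is isomorphic to $\pi\otimes\chi$ for some unramified $\chi$. Frobenius reciprocity yields a surjection $c\Ind_{\oX{J}{}}^G\oX{\tau}{}\twoheadrightarrow\sigma$, and induction in stages rewrites this as $c\Ind_J^G\bigl(c\Ind_{\oX{J}{}}^J\oX{\tau}{}\bigr)$. Since $\oX{J}{}$ is normal in $J$ with abelian quotient $J/\oX{J}{}\hookrightarrow G/\tsr$, Clifford theory decomposes $c\Ind_{\oX{J}{}}^J\oX{\tau}{}$ as a direct sum (indexed by characters of the stabilizer of $\oX{\tau}{}$ modulo $\oX{J}{}$) of extensions of $\oX{\tau}{}$ twisted by characters of $J/\oX{J}{}$; each such character extends to a character of the lattice $G/\tsr$ by injectivity of $\mathbb{C}^\times$ as a $\mathbb{Z}$-module, hence to an unramified character of $G$. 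Compactly inducing to $G$ then writes $c\Ind_{\oX{J}{}}^G\oX{\tau}{}$ as a direct sum of unramified twists of $\pi$, forcing $\sigma$ to be one of these. The main obstacle I anticipate is the Clifford-theoretic bookkeeping in this final step: correctly identifying the stabilizer of $\oX{\tau}{}$ in $J$, constructing a compatible extension, and verifying that the characters of $J/\oX{J}{}$ that arise are exactly those matching unramified twists of $\pi$ with no extraneous constituents, so that the abstract injectivity argument really produces the desired bijection between summands of $c\Ind_{\oX{J}{}}^G\oX{\tau}{}$ and elements of $\mathfrak{s}$.
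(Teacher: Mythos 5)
The paper offers no proof of this Proposition; it is cited verbatim to \cite[Sec.\ 5.4]{BK98}, so there is no in-paper argument to compare against. Your sketch does follow the same general route as Bushnell--Kutzko. On the first claim the argument is essentially right, but the parenthetical ``by the same general fact applied to the center'' does not deliver what you need: that fact says compact subgroups of ${\bf Z}_{\bf G}(k)$ lie inside its maximal compact, whereas you need the containment ${\bf Z}_{\bf G}(k)\cap\tsr\subset(\text{maximal compact of }{\bf Z}_{\bf G}(k))$. For that you must use that the restriction map $X_k({\bf G})\to X_k({\bf Z}_{\bf G})$ has finite cokernel (via the isogeny ${\bf Z}_{\bf G}^{\circ}\to{\bf G}/{\bf G}_{\mathrm{der}}$), so that $\val_k(\chi(z))=0$ for all $\chi\in X_k({\bf G})$ already forces $\val_k(\psi(z))=0$ for all $\psi\in X_k({\bf Z}_{\bf G})$.

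The more serious gap is the one you flag yourself: for the type assertion, the Clifford-theoretic step you defer is the entire content of the claim, and it is not routine. You would need to identify the stabilizer $J_1$ of $\oX{\tau}{}$ in $J$ (it need not be all of $J$); show that the $2$-cocycle on $J_1/\oX{J}{}$ arising from $\oX{\tau}{}$ is a coboundary so that a genuine extension of $\oX{\tau}{}$ to $J_1$ exists (this is not automatic --- free abelian groups of higher rank have nontrivial $H^{2}(\cdot,\mathbb{C}^{\times})$, so one must argue from finite-dimensionality of $\tau$); and, after inducing along $J_1\subset J\subset G$ and extending characters of $J_1/\oX{J}{}$ up $J/\oX{J}{}\hookrightarrow G/\tsr$, verify that every constituent of $c\Ind_{\oX{J}{}}^{G}\oX{\tau}{}$ so produced is an unramified twist of $\pi$ with nothing extraneous. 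You also silently use the (standard, but citable) reduction that a bijection on irreducibles suffices to conclude the categorical statement $\mathfrak{R}_{\oX{\tau}{}}(G)=\mathfrak{R}(G)^{[G,\pi]_G}$. As written, the strategy and the obstacle are correctly identified, but the heart of the proof is left unexecuted.
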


\subsection{\label{Comm-condition}Commutativity conditions}

Assume that the representation $\pi$ satisfies the following conditions:
\begin{enumerate}
\item The representation $\tau|\oX{J}{}$ is irreducible, i.e., $\oX{\tau}{}=\tau|\oX{J}{}$.
\item Any $g\in G$ which intertwines the representation $\oX{\tau}{}$
lies in $J$. 
\end{enumerate}
These conditions are quite frequently satisfied (see \cite[Sec. 5.5]{BK98}),
for instance if $\pi$ admits a Whittaker model (\cite[Remark 1.6.1.3]{Roche09}).
Under these assumptions, we have:
\begin{prop}
\cite[Sec. 5.5]{BK98} \label{prop:comm}The Hecke algebra $\mathcal{H}(G,\oX{\tau}{})$
associate to the type $(\oX{J}{},\oX{\tau}{})$ is commutative. 
\end{prop}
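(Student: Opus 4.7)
The plan is to identify $\mathcal{H}(G,\oX{\tau}{})$ with the complex group algebra $\mathbb{C}[J/\oX{J}{}]$ and then exploit the fact that this quotient is abelian.

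First, I would invoke the standard Mackey-type decomposition of the Hecke algebra according to $(\oX{J}{},\oX{J}{})$-double cosets: the subspace of $f \in \mathcal{H}(G,\oX{\tau}{})$ supported on a double coset $\oX{J}{}\,g\,\oX{J}{}$ is naturally isomorphic, via evaluation at $g$, to the intertwining space $\Hom_{\oX{J}{}\cap\tensor[^{g}]{\oX{J}{}}{}}(\tensor[^{g}]{\oX{\tau}{}}{},\oX{\tau}{})$. Condition (2) of Section \ref{Comm-condition} forces this space to vanish unless $g \in J$. Moreover $\oX{J}{} = J \cap \tsr$ is normal in $J$, because $\tsr = \ker \kappa_{G}$ is itself normal in $G$ by (\ref{eq:kott}); so for $g \in J$ the double coset $\oX{J}{}\,g\,\oX{J}{}$ collapses to the single left coset $g\,\oX{J}{}$.

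Next, for each $g \in J$, condition (1) tells us $\oX{\tau}{} = \tau|\oX{J}{}$ is irreducible, while the operator $\tau(g)$ provides a nonzero $\oX{J}{}$-intertwiner from $\tensor[^{g}]{\oX{\tau}{}}{}$ to $\oX{\tau}{}$ (since $\tau$ is a representation of $J$ extending $\oX{\tau}{}$). Schur's lemma then pins the intertwining space down to the line $\mathbb{C}\tau(g)$, so the function $f_{g}$ defined by $f_{g}(x) := \tau(x)$ for $x \in g\oX{J}{}$ and zero elsewhere is a canonical basis element, and $\{f_{g}\}_{g \in J/\oX{J}{}}$ is a $\mathbb{C}$-basis of $\mathcal{H}(G,\oX{\tau}{})$.

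The third step is a short convolution computation, normalized so that $\oX{J}{}$ has volume one: one finds that
\[
(f_{g} * f_{h})(gh) \;=\; \int_{\oX{J}{}} \tau(gx)\,\tau(x^{-1}h)\,dx \;=\; \tau(gh),
\]
so $f_{g} * f_{h} = f_{gh}$. The proof then concludes by noting that $J/\oX{J}{}$ embeds via $\kappa_{G}$ into $G/\tsr$, which by (\ref{eq:kott}) is a finitely generated free abelian group; hence $f_{gh} = f_{hg}$ for all $g,h \in J$, and $\mathcal{H}(G,\oX{\tau}{})$ is commutative.

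The most delicate point is verifying that no nontrivial $2$-cocycle obstructs the identity $f_{g} * f_{h} = f_{gh}$. This boils down to the fact that $\tau$ is a genuine linear (not merely projective) extension of $\oX{\tau}{}$ from $\oX{J}{}$ to $J$, which is built into the data of the supercuspidal $\pi = c\Ind_{J}^{G}\tau$.
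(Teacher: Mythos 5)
Your proof is correct and follows the same line of argument as the cited reference \cite[Sec.\ 5.5]{BK98}, which the paper itself quotes without reproving: condition (2) confines the support of the Hecke algebra to $J$, condition (1) plus Schur's lemma makes each contribution from a coset $g\,\oX{J}{}$ one-dimensional with canonical basis element $f_g$, the convolution is an untwisted group algebra of $J/\oX{J}{}$ because $\tau$ is a genuine (not merely projective) extension of $\oX{\tau}{}$, and $J/\oX{J}{}$ is abelian since it embeds via $\kappa_G$ into the lattice $G/\tsr \cong X_{*}({\bf Z}_{{\bf G}})_{I_{k}}^{\Fr}/\ona{torsion}$. You also correctly flag the one point where the argument could have gone wrong, namely a possible $2$-cocycle twist, and explain why it does not occur here.
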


\section{\label{sec:Main}Main result}

We use the notations of Section \ref{sec:Yu}. Fix a generic ${\bf G}$-datum
$\Sigma=(\overrightarrow{{\bf G}},y,\overrightarrow{r},\overrightarrow{\phi},\rho)$.
Then $\pi_{\Sigma}:=c\Ind_{K^{d}}^{G}\rho_{\Sigma}$ is an irreducible
supercuspidal representation of $G$, where $\rho_{\Sigma}$ is of
the form $\rho\otimes\kappa$ and $\kappa$ is a representation of
$K^{d}$, constructed only out of the data $(\overrightarrow{{\bf G}},y,\overrightarrow{r},\overrightarrow{\phi})$.
The representation $\pi_{0}=c\Ind_{G_{[y]}^{0}}^{G^{0}}\rho$ of $G^{0}$
is depth zero supercuspidal. Set $\mathfrak{s:=}[G,\pi_{\Sigma}]_{G}$
and $\mathfrak{s}_{0}:=[G^{0},\pi_{0}]_{G^{0}}$. Let $\mathfrak{Z}(G)$
(resp. $\mathfrak{Z}(G)^{\mathfrak{s}}$, resp. $\mathfrak{Z}(G^{0})^{\mathfrak{s}_{0}}$)
be the Bernstein center of the category $\mathfrak{R}(G)$ (resp.
$\mathfrak{R}(G)^{\mathfrak{s}}$, resp. $\mathfrak{R}(G^{0})^{\mathfrak{s}_{0}}$).
Let $\Irr^{\mathfrak{s}}(G)$ (resp. $\Irr^{\mathfrak{s_{0}}}(G^{0})$)
denote the isomorphism classes of irreducible elements in $\mathfrak{R}(G)^{\mathfrak{s}}$
(resp. $\mathfrak{R}(G^{0})^{\mathfrak{s_{0}}}$). We assume the hypothesis
$C(\overrightarrow{{\bf G}})$ in \cite[Page 47]{HM08} in the rest
of this section. 

By functoriality of the map (\ref{eq:kott}), the inclusion ${\bf G}^{0}\hookrightarrow{\bf G}$
induces a map 
\begin{equation}
\chi\in X_{\nr}(G)\mapsto\chi|G^{0}\in X_{\nr}(G^{0}).\label{eq:rest-1}
\end{equation}

\begin{thm}
\label{thm:main}The map $\mathfrak{f}_{\Sigma}:\pi_{\Sigma}\otimes\nu\in\Irr^{\mathfrak{s}}(G)\mapsto\pi_{0}\otimes(\nu|G^{0})\in\Irr^{\mathfrak{s_{0}}}(G^{0})$,
$\nu\in X_{\nr}(G)$, is well defined and is a bijection. Consequently,
there is an isomorphism $f_{\Sigma}:\mathfrak{Z}(G)^{\mathfrak{s}}\cong\mathfrak{Z}(G^{0})^{\mathfrak{s}_{0}}$.\end{thm}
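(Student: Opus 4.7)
The plan is to use Theorem \ref{thm:HM} to translate the twist structure on $\Irr^{\mathfrak{s}}(G)$ into that of $\Irr^{\mathfrak{s}_0}(G^0)$ via the restriction map $X_{\nr}(G)\to X_{\nr}(G^0)$, and then upgrade the resulting set-theoretic bijection to an isomorphism of Bernstein centers by identifying each center with the coordinate ring of a Bernstein variety. The first step is to establish twist-compatibility of Yu's construction: for $\nu\in X_{\nr}(G)$, I would show that $\Sigma_\nu:=(\overrightarrow{{\bf G}},y,\overrightarrow{r},\overrightarrow{\phi},\rho\otimes(\nu|K^0))$ is again a generic ${\bf G}$-datum, with $\pi_{\Sigma_\nu}\cong\pi_\Sigma\otimes\nu$ and associated depth zero supercuspidal $\pi_0\otimes(\nu|G^0)$. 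The datum is valid because $\nu$ is trivial on $\tsr$, hence on $K^{0+}\subset\oX{G}{0}\subset\tsr$, so $\rho\otimes(\nu|K^0)$ is still isotrivial on $K^{0+}$ and iso-irreducible. For the representation identity, the crucial fact is $\nu|J^i=1$ for $1\le i\le d$: the groups $J^i$ are generated by affine root subgroups of ${\bf G}$ inside $\tsr$. Therefore the trivial extension of $\rho\otimes(\nu|K^0)$ from $K^0$ to $K^d$ coincides with the trivial extension of $\rho$ tensored by $\nu|K^d$, and compact induction yields $\pi_\Sigma\otimes\nu$. This realizes $\mathfrak{f}_\Sigma$ at the level of Yu data.

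With Step 1 in place, the well-definedness and injectivity of $\mathfrak{f}_\Sigma$ amount to: $\pi_\Sigma\otimes\nu_1\cong\pi_\Sigma\otimes\nu_2$ iff $\pi_0\otimes\nu_1|G^0\cong\pi_0\otimes\nu_2|G^0$. The ($\Leftarrow$) direction follows from Theorem \ref{thm:HM} applied with $g=1$. For ($\Rightarrow$), Theorem \ref{thm:HM} furnishes $g\in G$ with ${}^gG^0=G^0$ and $(\pi_0\otimes\nu_1|G^0)\otimes\phi\cong{}^g((\pi_0\otimes\nu_2|G^0)\otimes\phi)$. Because $G/\tsr$ is abelian, ${}^g(\nu_i|G^0)=\nu_i|G^0$, so the relation rearranges to ${}^g(\pi_0\otimes\phi)\cong(\pi_0\otimes\phi)\otimes(\nu_1\nu_2^{-1})|G^0$. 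Invoking the second formulation of Theorem \ref{thm:HM}, ${}^gK^d=K^d$ and ${}^g\rho_{\Sigma_{\nu_2}}\cong\rho_{\Sigma_{\nu_1}}$; since $\kappa$ depends only on the data $(\overrightarrow{{\bf G}},y,\overrightarrow{r},\overrightarrow{\phi})$ fixed by $g$, one extracts ${}^g\rho\cong\rho\otimes(\nu_1\nu_2^{-1})|K^0$, hence ${}^g\pi_0\cong\pi_0\otimes(\nu_1\nu_2^{-1})|G^0$. Comparing with the earlier relation forces ${}^g\phi=\phi$ and then yields $\pi_0\otimes(\nu_1\nu_2^{-1})|G^0\cong\pi_0$, as desired. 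For surjectivity, the anisotropy of ${\bf Z}_{{\bf G}^0}/{\bf Z}_{{\bf G}}$ from the Yu datum translates via the Kottwitz map (\ref{eq:kott}) into $X_*({\bf Z}_{{\bf G}^0}/{\bf Z}_{{\bf G}})^{\Fr}_{I_k}/\tors=0$; a diagram-chase shows that the cokernel of the restriction map $X_{\nr}(G)\to X_{\nr}(G^0)$ is finite and absorbed by $\stb(\pi_0)$, so every $\pi_0\otimes\chi_0\in\Irr^{\mathfrak{s}_0}(G^0)$ lies in the image of $\mathfrak{f}_\Sigma$.

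For the ``consequently'' clause, since $\pi_\Sigma$ is supercuspidal, $\Irr^{\mathfrak{s}}(G)$ is the set of complex points of the Bernstein variety $\Omega^{\mathfrak{s}}\cong X_{\nr}(G)/\stb(\pi_\Sigma)$, a complex affine variety whose coordinate ring is canonically isomorphic to $\mathfrak{Z}(G)^{\mathfrak{s}}$; the analogous description holds for $\mathfrak{Z}(G^0)^{\mathfrak{s}_0}$. Since $\mathfrak{f}_\Sigma$ is induced by the algebraic homomorphism $\nu\mapsto\nu|G^0$, the bijection lifts to an isomorphism of affine varieties $\Omega^{\mathfrak{s}}\to\Omega^{\mathfrak{s}_0}$, and pulling back on global sections yields the ring isomorphism $f_\Sigma:\mathfrak{Z}(G)^{\mathfrak{s}}\cong\mathfrak{Z}(G^0)^{\mathfrak{s}_0}$. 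The main obstacle will be Step 2: the HM-provided element $g$ need not lie in $G^0$, and one must carefully disentangle the twist ${}^g\phi\cdot\phi^{-1}$ on the positive-depth character from the genuine intertwining action on $\pi_0$. Showing that these decouple cleanly — so that the unramified-character identity $\pi_0\otimes(\nu_1\nu_2^{-1})|G^0\cong\pi_0$ can be isolated — is where the genericity of $\overrightarrow{\phi}$ and the hypothesis $C(\overrightarrow{{\bf G}})$ must genuinely enter; the surjectivity argument is also delicate and requires the anisotropy hypothesis to land correctly in the Kottwitz formalism.
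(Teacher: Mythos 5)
Your overall architecture matches the paper's — define a twisted datum $\Sigma_\nu$, note that $\pi_{\Sigma_\nu}\cong\pi_\Sigma\otimes\nu$, and run Theorem~\ref{thm:HM} both ways — but you leave the central step as an acknowledged gap, and that gap is precisely what the paper's argument is designed to close.

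Specifically, in your Step~2 you obtain from Theorem~\ref{thm:HM} an element $g\in G$ with $\tensor[^g]{G^0}{}=G^0$, and you then try to disentangle the relation $(\pi_0\otimes\nu_1|G^0)\otimes\phi\cong{}^g\bigl((\pi_0\otimes\nu_2|G^0)\otimes\phi\bigr)$ by isolating $\pi_0\otimes(\nu_1\nu_2^{-1})|G^0\cong\pi_0$. You write that ``comparing with the earlier relation forces ${}^g\phi=\phi$'' but never establish this, and you explicitly concede that the decoupling is the main obstacle. That obstacle does not disappear by itself. The paper closes it by a different mechanism: since $\chi$ is unramified it is trivial on the compact part, so the representations $\rho_\Sigma$ and $\rho_{\Sigma_\chi}$ \emph{agree on $K^{d+}$}; therefore the intertwining element $g$ from Theorem~\ref{thm:HM} intertwines $\rho_\Sigma|K^{d+}$, and by Yu's intertwining computations \cite[Prop.~4.4 and 4.1]{Yu01} such a $g$ must lie in $K^d G^0 K^d$. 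After replacing $g$ by an element of $G^0$, the problematic conjugation becomes inner on $G^0$ and the conclusion $\pi_0\otimes\chi|G^0\cong\pi_0$ drops out of Theorem~\ref{thm:HM} with no further disentangling needed. This reduction to $g\in G^0$ is the key technical input you are missing.

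Your surjectivity argument is also off-track. You attempt to show that $X_{\nr}(G)\to X_{\nr}(G^0)$ has finite cokernel absorbed by $\stb(\pi_0)$ via the anisotropy of ${\bf Z}_{{\bf G}^0}/{\bf Z}_{{\bf G}}$ and a Kottwitz-map diagram chase; this is sketched rather than proved, and the paper does not need it. The paper's surjectivity argument is cleaner: for $\nu\in X_{\nr}(G^0)$, the associated types $(\oX{K}{d},\oX{\rho}{}_{\Sigma})$ and $(\oX{K}{d},\oX{\rho}{}_{\Sigma_\nu})$ are literally equal (again because $\nu$ is trivial on the maximal compact $\oX{K}{d}$), so $\pi_{\Sigma_\nu}$ lies in the same Bernstein block as $\pi_\Sigma$ and is therefore of the form $\pi_\Sigma\otimes\chi$ for some $\chi\in X_{\nr}(G)$; well-definedness then gives $\pi_0\otimes\nu\cong\pi_0\otimes(\chi|G^0)$. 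Your final paragraph identifying the Bernstein center with the coordinate ring of $\Irr^{\mathfrak{s}}(G)$ agrees with the paper.
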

\begin{proof}
We first prove well definedness. Suppose $\pi_{\Sigma}\otimes\chi\cong\pi_{\Sigma}$
for $\chi\in X_{\nr}(G)$. Then we want to show that $\pi_{0}\otimes\chi|G^{0}\cong\pi_{0}$.
Define a new quintuple $\Sigma_{\chi}=(\overrightarrow{{\bf G}},y,\overrightarrow{r},\overrightarrow{\phi},\rho\otimes(\chi|K^{0}))$.
We have $\pi_{\Sigma}\otimes\chi\cong c\Ind_{K^{d}}^{G}(\rho\otimes\kappa\otimes(\chi|K^{d}))$.
Since $\chi$ is unramified, it follows that $\pi_{\Sigma}\otimes\chi\cong\pi_{\Sigma_{\chi}}$.
By Theorem \ref{thm:HM}, $\pi_{\Sigma}\cong\pi_{\Sigma_{\chi}}$
is equivalent to $(K^{d},\rho_{\Sigma})$ being conjugate to $(K^{d},\rho_{\Sigma_{\chi}})$
by an element $g\in G$. Since $\rho_{\Sigma}|K^{d+}=\rho_{\Sigma_{\chi}}|K^{d+}$,
it follows that $g$ intertwines $\rho_{\Sigma}|K^{d+}$. By \cite[Prop. 4.4 and 4.1]{Yu01},
it implies that $g\in K^{d}G^{0}K^{d}$. Thus we can assume without
loss of generality that $g\in G^{0}$. Let $\rho^{\prime}=(\rho\otimes\chi|K^{0})$.
Then by Theorem \ref{thm:HM}, we get $\pi_{0}\otimes\phi\cong\tensor[]{(\pi_{0}^{\prime}\otimes\phi)}{}$
as $G^{0}$-representations, where $\phi$ is as in Theorem \ref{thm:HM}
and $\pi_{0}^{\prime}:=c\Ind_{G_{[y]}^{0}}^{G^{0}}\rho^{\prime}\cong\pi_{0}\otimes(\chi|G^{0})$.
It follows that $\pi_{0}\otimes\chi|G^{0}\cong\pi_{0}$ and therefore
$\mathfrak{f}_{\Sigma}$ is well defined. Now if $\chi\in X_{\nr}(G)$
is such that $\pi_{0}\otimes\chi|G^{0}\cong\pi_{0}$, then it follows
from Theorem \ref{thm:HM} or directly that $\pi_{\Sigma_{\chi}}\cong\pi_{\Sigma}$,
i.e., $\pi_{\Sigma}\otimes\chi\cong\pi_{\Sigma}$. This shows that
the map $\mathfrak{f}_{\Sigma}$ is also injective. 

We now prove surjectivity. Now given $\nu\in X_{\nr}(G^{0})$, using
notation similar to before, write $\Sigma_{\nu}=(\overrightarrow{{\bf G}},y,\overrightarrow{r},\overrightarrow{\phi},\rho\otimes(\nu|K^{0}))$.
Let $(\oX{K}{d},\oX{\rho}{}_{\Sigma})$ (resp. $(\oX{K}{d},\oX{\rho}{}_{\Sigma_{\nu}})$)
be a type constructed out of $(K^{d},\rho_{\Sigma})$ (resp. $(K^{d},\rho_{\Sigma_{\nu}})$
as in Sec. \ref{sec:sup-block}. Then $\oX{K}{0}=G_{y}^{0}$ and $\oX{K}{d}=\oX{K}{0}G_{y,s_{0}}^{1}\cdots G_{y,s_{d-1}}^{d}$
(see notations in Sec. \ref{sec:Yu}) is the maximal compact subgroup
of $K^{d}$ (see \cite[Cor. 15.3]{Yu01}). Since $\rho_{\Sigma}|\oX{K}{d}=\rho_{\Sigma_{\chi}}|\oX{K}{d}$,
we can assume that $(\oX{K}{d},\oX{\rho}{}_{\Sigma})=(\oX{K}{d},\oX{\rho}{}_{\Sigma_{\nu}})$.
Now since $(\oX{K}{d},\oX{\rho}{}_{\Sigma})$ is an $\mathfrak{s}$-type,
it follows that $\pi_{\Sigma_{\nu}}\cong\pi_{\Sigma}\otimes\chi$
for some $\chi\in X_{\nr}(G)$. By the argument used in the proof
of the well definedness of the map $\mathfrak{f}_{\Sigma}$ in the
previous paragraph, we get, $\pi_{0}\otimes\nu\cong\pi_{0}\otimes(\chi|G^{0})$,
i.e., $\pi_{0}\otimes\nu$ is the image of $\pi_{\Sigma}\otimes\chi$
under $\mathfrak{f}_{\Sigma}$. Thus $\mathfrak{f}_{\Sigma}$ is also
surjective.

We thus have a bijection $\mathfrak{f}_{\Sigma}:\pi_{\Sigma}\otimes\chi\in\Irr^{\mathfrak{s}}(G)\mapsto\pi_{0}\otimes(\chi|G^{0})\in\Irr^{\mathfrak{s_{0}}}(G^{0}),\chi\in X_{\nr}(G)$.
Since $\mathfrak{Z}(G)^{\mathfrak{s}}$ (resp. $\mathfrak{Z}(G^{0})^{\mathfrak{s}_{0}}$)
is canonically the ring of regular functions on $\Irr^{\mathfrak{s}}(G)$
(resp. $\Irr^{\mathfrak{s_{0}}}(G^{0})$) \cite[Prop. 1.6.4.1]{Roche09},
the Theorem follows. 
\end{proof}
For each irreducible object $\tau\in\mathfrak{R}(G)$ and $z\in\mathfrak{Z}(G)$,
denote by $\chi_{z}(\tau)$, the scalar by which $z$ acts on $\tau$. 
\begin{cor}
Let $z\in\mathfrak{Z}(G)^{\mathfrak{s}}$ and $\pi\in\Irr^{\mathfrak{s}}(G)$.
Then $\chi_{z}(\pi)=\chi_{f_{\Sigma}(z)}(\mathfrak{f}_{\Sigma}(\pi)).$ \end{cor}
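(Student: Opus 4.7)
The plan is to unravel the definition of $f_{\Sigma}$ given in the proof of Theorem \ref{thm:main} and observe that the stated identity is essentially built into that definition. The crucial input, already invoked at the end of that proof via \cite[Prop. 1.6.4.1]{Roche09}, is the canonical identification of $\mathfrak{Z}(G)^{\mathfrak{s}}$ (respectively $\mathfrak{Z}(G^{0})^{\mathfrak{s}_{0}}$) with the ring of regular functions on the Bernstein variety $\Irr^{\mathfrak{s}}(G)$ (respectively $\Irr^{\mathfrak{s}_{0}}(G^{0})$). Under this identification an element $z\in\mathfrak{Z}(G)^{\mathfrak{s}}$ is sent to the function $\pi\mapsto\chi_{z}(\pi)$: since each $\pi\in\Irr^{\mathfrak{s}}(G)$ is irreducible, the natural transformation $z$ acts on $\pi$ by a scalar, which is by definition $\chi_{z}(\pi)$.

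Next I would make explicit how $f_{\Sigma}$ arises from $\mathfrak{f}_{\Sigma}$. The isomorphism of rings in Theorem \ref{thm:main} is obtained precisely by transporting regular functions along the bijection $\mathfrak{f}_{\Sigma}$; that is, for $z\in\mathfrak{Z}(G)^{\mathfrak{s}}$, the element $f_{\Sigma}(z)\in\mathfrak{Z}(G^{0})^{\mathfrak{s}_{0}}$ is the unique center element whose associated function on $\Irr^{\mathfrak{s}_{0}}(G^{0})$ is $\chi_{z}\circ\mathfrak{f}_{\Sigma}^{-1}$. This is the only way the bijection $\mathfrak{f}_{\Sigma}$ can produce a ring isomorphism between the two function rings, and it is forced by the canonical character of the Roche identification.

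With these two observations in place, the corollary is immediate. For $\pi\in\Irr^{\mathfrak{s}}(G)$, setting $\pi_{0}':=\mathfrak{f}_{\Sigma}(\pi)\in\Irr^{\mathfrak{s}_{0}}(G^{0})$, I would simply compute
\[
\chi_{f_{\Sigma}(z)}(\mathfrak{f}_{\Sigma}(\pi))=\chi_{f_{\Sigma}(z)}(\pi_{0}')=(\chi_{z}\circ\mathfrak{f}_{\Sigma}^{-1})(\pi_{0}')=\chi_{z}(\pi).
\]
Because the whole argument is a formal unwinding of the definition of $f_{\Sigma}$ via the characterization of the Bernstein center as regular functions on the Bernstein variety, there is no real obstacle; the only point requiring care is to verify that the isomorphism $f_{\Sigma}$ produced at the end of the proof of Theorem \ref{thm:main} is indeed the transport-of-structure isomorphism, not some other isomorphism that happens to exist between the two rings. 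This is precisely what \cite[Prop. 1.6.4.1]{Roche09} guarantees.
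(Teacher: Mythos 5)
Your argument is correct and takes essentially the same route as the paper, whose proof of this corollary is a one-line citation of Roche's identification of $\mathfrak{Z}(G)^{\mathfrak{s}}$ with regular functions on $\Irr^{\mathfrak{s}}(G)$ together with Theorem \ref{thm:main}; you have simply unwound those two ingredients. The only loose phrase is the assertion that transport of structure is ``the only way'' $\mathfrak{f}_{\Sigma}$ could yield a ring isomorphism---more precisely, $f_{\Sigma}$ is \emph{by construction} the pullback isomorphism $z\mapsto\chi_{z}\circ\mathfrak{f}_{\Sigma}^{-1}$ produced at the end of the proof of Theorem \ref{thm:main}, which is exactly what makes your final computation go through.
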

\begin{proof}
This follows from \cite[Prop. 1.6.4.1]{Roche09} and Theorem \ref{thm:main}.
\end{proof}
For an algebra $\mathcal{A}$, denote by $Z(\mathcal{A})$ the center
of $\mathcal{A}.$ Let $\mathcal{H}(G,\oX{\rho}{}_{\Sigma})$ (resp.
$\mathcal{H}(G^{0},\oX{\rho}{})$) denote the Hecke algebra associated
to the compact open data $({}^{\circ}K^{d},\oX{\rho}{}_{\Sigma})$
(resp. $({}^{\circ}K^{0},\oX{\rho}{})$) (see Sec. \ref{sub:Hecke-algebra}). 
\begin{cor}
\label{cor:hec_cent} $Z(\mathcal{H}(G,\oX{\rho}{}_{\Sigma}))\cong Z(\mathcal{H}(G^{0},\oX{\rho}{}))$.\end{cor}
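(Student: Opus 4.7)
The plan is to observe that this is a direct corollary of Theorem \ref{thm:main} combined with the standard dictionary between Bernstein centers and centers of Hecke algebras of types, which is already recorded in Equation (\ref{eq:center=00003DHecke}).

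First, I would identify the relevant $\mathfrak{s}$-type and $\mathfrak{s}_0$-type. By the proposition in Section \ref{sec:sup-block}, the pair $({}^{\circ}K^{d},\oX{\rho}{}_{\Sigma})$ extracted from the supercuspidal datum $(K^{d},\rho_{\Sigma})$ is an $\mathfrak{s}$-type in $G$; similarly $({}^{\circ}K^{0},\oX{\rho}{})$ is an $\mathfrak{s}_0$-type in $G^{0}$. Consequently, the type-theoretic equivalence of categories yields $\mathfrak{R}(G)^{\mathfrak{s}}\cong\mathcal{H}(G,\oX{\rho}{}_{\Sigma})-\mathfrak{Mod}$ and $\mathfrak{R}(G^{0})^{\mathfrak{s}_0}\cong\mathcal{H}(G^{0},\oX{\rho}{})-\mathfrak{Mod}$.

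Second, I would invoke Equation (\ref{eq:center=00003DHecke}) on each side, which gives canonical ring isomorphisms $\mathfrak{Z}(G)^{\mathfrak{s}}\cong Z(\mathcal{H}(G,\oX{\rho}{}_{\Sigma}))$ and $\mathfrak{Z}(G^{0})^{\mathfrak{s}_0}\cong Z(\mathcal{H}(G^{0},\oX{\rho}{}))$. Splicing these with the isomorphism $f_{\Sigma}:\mathfrak{Z}(G)^{\mathfrak{s}}\cong\mathfrak{Z}(G^{0})^{\mathfrak{s}_0}$ provided by Theorem \ref{thm:main} immediately produces the desired isomorphism $Z(\mathcal{H}(G,\oX{\rho}{}_{\Sigma}))\cong Z(\mathcal{H}(G^{0},\oX{\rho}{}))$.

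There is essentially no obstacle here since all the heavy lifting was done in Theorem \ref{thm:main}; the only thing worth double-checking is that the two types under consideration really are types for the blocks $\mathfrak{s}$ and $\mathfrak{s}_0$ respectively, so that the centers line up correctly. This is assured by the proposition quoted from \cite{BK98} in Section \ref{sec:sup-block}, applied to $(K^{d},\rho_{\Sigma})$ with $\pi=\pi_{\Sigma}$ and to $(G^{0}_{[y]},\rho)$ with $\pi=\pi_{0}$. Thus the proof reduces to a one-line chain of canonical isomorphisms.
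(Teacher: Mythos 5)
Your proposal is correct and matches the paper's proof exactly: both invoke the fact that $({}^{\circ}K^{d},\oX{\rho}{}_{\Sigma})$ and $({}^{\circ}K^{0},\oX{\rho}{})$ are $\mathfrak{s}$- and $\mathfrak{s}_0$-types (via the Bushnell--Kutzko proposition of Section \ref{sec:sup-block}), then compose Equation (\ref{eq:center=00003DHecke}) on each side with the isomorphism $f_{\Sigma}$ from Theorem \ref{thm:main}. The only difference is that you spell out the intermediate steps, whereas the paper states the conclusion in one line.
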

\begin{proof}
Since $(\oX{K}{d},\oX{\rho}{}_{\Sigma})$ (resp. $({}^{\circ}K^{0},\oX{\rho}{})$)
is an $\mathfrak{s}$-type (resp. $\mathfrak{s_{0}}$-type), this
follows from Equation \ref{eq:center=00003DHecke} and Theorem \ref{thm:main}.
\end{proof}
Now suppose that $\pi_{\Sigma}$ satisfies the commutativity conditions
of Sec. \ref{Comm-condition}. With this assumption, we get the following
result.
\begin{thm}
\label{cor:yu-conj}$\mathcal{H}(G,\oX{\rho}{}_{\Sigma})\cong\mathcal{H}(G^{0},\oX{\rho}{})$.\end{thm}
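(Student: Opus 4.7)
The plan is to combine Corollary \ref{cor:hec_cent} with the commutativity of both Hecke algebras. Under the hypothesis of the theorem, Proposition \ref{prop:comm} immediately gives $\mathcal{H}(G,\oX{\rho}{}_{\Sigma})=Z(\mathcal{H}(G,\oX{\rho}{}_{\Sigma}))$. If I can also show that $\mathcal{H}(G^{0},\oX{\rho}{})$ is commutative, then the theorem follows from the chain
\[
\mathcal{H}(G,\oX{\rho}{}_{\Sigma})=Z(\mathcal{H}(G,\oX{\rho}{}_{\Sigma}))\cong Z(\mathcal{H}(G^{0},\oX{\rho}{}))=\mathcal{H}(G^{0},\oX{\rho}{}).
\]
So my task reduces to verifying that $\pi_{0}$ also satisfies the two commutativity conditions of Section \ref{Comm-condition}.

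For condition (1), I would argue as follows. Since $\rho_{\Sigma}=\rho\otimes\kappa$ on $K^{d}$ with $\rho$ inflated from $K^{0}$ via Yu's iterated quotients, the restriction $\rho|\oX{K}{d}$ is the inflation of $\rho|\oX{K}{0}$. Consequently any $\oX{K}{0}$-stable decomposition of $\rho$ pulls back to a decomposition of $\rho|\oX{K}{d}$ and, after tensoring with $\kappa|\oX{K}{d}$, produces a nontrivial decomposition of $\rho_{\Sigma}|\oX{K}{d}$. Irreducibility of $\rho_{\Sigma}|\oX{K}{d}$ --- hypothesis (1) for $\pi_{\Sigma}$ --- therefore forces $\rho|\oX{K}{0}$ to be irreducible.

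For condition (2), suppose $g\in G^{0}$ intertwines $\oX{\rho}{}$; I would deduce $g\in K^{0}$ in three steps. First, I promote the intertwining of $\oX{\rho}{}$ to an intertwining of $\oX{\rho}{}_{\Sigma}$ in $G$, using that $\kappa$ is built from the data above $G^{0}$ and is intertwined in $G$ by every element of $G^{0}$ (each factor of $\kappa$ arises from a Heisenberg-Weil representation whose symplectic module $J^{i}/J^{i+}$ carries an action of $G^{i-1}\supset G^{0}$); this is essentially the mechanism used through \cite[Prop. 4.1, 4.4]{Yu01} in the proof of Theorem \ref{thm:main}. Second, I apply hypothesis (2) for $\pi_{\Sigma}$ to conclude $g\in K^{d}$. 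Third, I intersect with $G^{0}$ via the factorization $K^{d}=K^{0}G_{y,s_{0}}^{1}\cdots G_{y,s_{d-1}}^{d}$, combined with $G_{y,s_{i}}^{i+1}\cap G^{0}\subset G_{y,s_{i}}^{0}\subset K^{0}$ (from the standard decomposition of Moy-Prasad subgroups along the twisted Levi $G^{0}\subset G^{i+1}$), yielding $K^{d}\cap G^{0}=K^{0}$.

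The principal obstacle I expect is the first step of condition (2): making precise how a $G^{0}$-intertwining of the depth-zero piece $\oX{\rho}{}$ combines with the $G^{0}$-equivariance of $\kappa$ to produce a $G$-intertwining of $\oX{\rho}{}_{\Sigma}$ on the overlap ${}^{g}\oX{K}{d}\cap\oX{K}{d}$. This demands careful bookkeeping of how $g\in G^{0}$ permutes the higher Moy-Prasad subgroups and acts on the Heisenberg-Weil factors, in the style of Yu's intertwining lemmas; once this transfer is justified, conditions (1) and (2) both hold for $\pi_{0}$ and Corollary \ref{cor:hec_cent} delivers the isomorphism.
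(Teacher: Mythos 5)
Your overall architecture matches the paper's proof exactly: show $\pi_{0}$ inherits both commutativity conditions from $\pi_{\Sigma}$, conclude via Proposition \ref{prop:comm} that both Hecke algebras equal their own centers, and then apply Corollary \ref{cor:hec_cent}. Your argument for condition (1) is also the paper's argument (a proper $\oX{K}{0}$-stable decomposition of $\rho$ would inflate and tensor with $\kappa$ to give a proper decomposition of $\rho_{\Sigma}|\oX{K}{d}$).

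The one place you diverge is the step you yourself flag as the principal obstacle: promoting an intertwining of $\oX{\rho}{}$ by $g\in G^{0}$ to an intertwining of $\oX{\rho}{}_{\Sigma}$. You propose to re-derive this from the Heisenberg--Weil structure of $\kappa$ in the spirit of \cite[Prop.~4.1, 4.4]{Yu01}, but the precise statement you need is already proved by Yu as \cite[Cor.~15.5]{Yu01}: for $g\in G^{0}$, $g$ intertwines $\oX{\rho}{}$ if and only if $g$ intertwines $\oX{\rho}{}_{\Sigma}$. This is exactly what the paper cites at this point. With that reference in hand your three-step chain closes: intertwining of $\oX{\rho}{}$ gives intertwining of $\oX{\rho}{}_{\Sigma}$, hence $g\in K^{d}$ by condition (2) for $\pi_{\Sigma}$, hence $g\in K^{d}\cap G^{0}=K^{0}$. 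So you located the crux correctly; the missing ingredient is not something to re-prove but a citable result, and substituting it in makes your proposal essentially identical to the paper's proof.
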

\begin{proof}
By assumption, $\rho_{\Sigma}|\oX{K}{d}$ is irreducible. Since $\rho_{\Sigma}=\rho\otimes\kappa$
in the notations of Sec. \ref{sec:Yu}, this implies that $\rho|\oX{K}{0}$
is also irreducible. Now by \cite[Corr. 15.5]{Yu01}, $g\in G^{0}$
intertwines $\oX{\rho}{}$ iff it intertwines $\oX{\rho}{}_{\Sigma}$
. But then by assumption, $g\in K^{d}$. Thus any $g\in G^{0}$ which
intertwines $\oX{\rho}{}$ lies in $K^{0}$. This means that $\pi_{0}$
also satisfies the commutativity conditions of Sec. \ref{Comm-condition}.
Then by Proposition \ref{prop:comm}, the Hecke algebras $\mathcal{H}(G,\oX{\rho}{}_{\Sigma})$
and $\mathcal{H}(G^{0},\oX{\rho}{})$ are commutative. The Theorem
then follows from Corollary \ref{cor:hec_cent}.
\end{proof}

\section*{Acknowledgement}

The author is very thankful to Rainer Weissauer, Jeff Adler, Sandeep
Varma and David Kazhdan for many helpful interactions. He is especially
grateful to Jiu-Kang Yu for his careful proof reading and making important
suggestions. He is also thankful to the Math Institute at Heidelberg
University for supporting his stay during which this work was written. 

\bibliographystyle{plain}
\bibliography{sup_block}

\end{document}